 \def\LaTeX{\leavevmode L\raise.42ex
   \hbox{\kern-.3em\size{\sf@size}{0pt}\selectfont A}\kern-.15em\TeX}
\newcommand{\BibTeX}{{\rm B\kern-.05em{\sc
i\kern-.025emb}\kern-.08em\TeX}}
\newtheorem{col}{Corollary}[section]
\newtheorem{thm}{Theorem}[section]
\newtheorem{defn}{Definition}
\theoremstyle{defn}
\newtheorem{lem}[thm]{Lemma}
\newtheorem{exmp}{Example}
\newtheorem{remark}[thm]{Remark}
\numberwithin{equation}{section}
\begin{document}

\title[Jackson-type inequality]{ Jackson-type inequality  in Hilbert spaces and on homogeneous  manifolds}
\author{Isaac Z. Pesenson}
\address{Department of Mathematics, Temple University,
Philadelphia, PA 19122} \email{pesenson@temple.edu}

\keywords{Jackson-type inequality, K-functor, one-parameter groups  of operators,  Paley-Wiener vectors, modulus of continuity, unitary representations of Lie groups, homogeneous manifolds}
  \subjclass{ 43A85, 41A17;}

 \begin{abstract} We consider a Hilbert  space ${\bf H}$ equipped with a set of strongly continuous bounded semigroups satisfying certain conditions. 
 The conditions allow  to  define a family of moduli of continuity $\Omega^{r}(s,f),\>r\in \mathbb{N}, s>0,$ of vectors in ${\bf H}$ and a family of Paley-Wiener subspaces $PW_{\sigma}$ parametrized by bandwidth $\sigma>0$.
 These subspaces are explored   to  introduce notion of the best approximation $\mathcal{E}(\sigma, f)$ of a general vector in ${\bf H}$ by Paley-Wiener vectors of a certain bandwidth $\sigma>0$. The main objective of the paper is to prove the so-called Jackson-type estimate $\mathcal{E}(\sigma, f)\leq C\left( \Omega^{r}(\sigma^{-1},f)+\sigma^{-r}\|f\|\right)$ for  $\sigma>1$.  
Our assumptions are satisfied for  a strongly continuous unitary representation of a Lie group $G$ in a Hilbert space ${\bf H}$.  It allows  to obtain the Jackson-type estimates on homogeneous manifolds.

\end{abstract}

\maketitle

 \section{Introduction and Main Results}

One of the main goals  of the classical harmonic analysis is to describe relations between frequency content of a function and its smoothness.   A famous result in this direction is the so-called Jackson Theorem for functions in $L_{2}(\mathbb{R})$: 
\begin{equation}\label{classical}
\inf_{g\in
PW_{\sigma}(\mathbb{R})}\|f-g\|=\|f-\mathcal{P}_{\sigma}f\|=\mathcal{E}(\sigma, f)\leq C\omega^{r}(\sigma^{-1}, f),\>\>\sigma>0,\>\>r\in \mathbb{N},
\end{equation}
where the Paley-Wiener space $PW_{\sigma}(\mathbb{R}), \>\sigma>0,$ is the space of functions in $L_{2}(\mathbb{R})$ whose Fourier transform has support in $[-\sigma,\>\sigma]$, $\mathcal{P}$ is the orthogonal projection of $L_{2}(\mathbb{R})$ onto $PW_{\sigma}(\mathbb{R})$, and 
$$
\omega^{r}( s, f)=\sup_{0\leq \tau\leq s}\|(T(\tau)-I)^{r}f\|_{2}, \>\>\>\>\>T(\tau)f(\cdot)=f(\cdot+\tau),
$$
 is the modulus of continuity. Similar estimate also holds true in the case of one dimensional torus $\mathbb{T}$ if one will replace $PW_{\sigma}$ by the space of trigonometric polynomials degree $\leq n$. In the case of $\mathbb{R}^{d}$ and $\mathbb{T}^{d}$ one defines corresponding modules of continuity by using one-parameter translation groups along single coordinates
\begin{equation}\label{transl}
T_{j}(\tau)f(x_{1}, ..., x_{j} ,... , x_{d})= f(x_{1}, ..., x_{j}+\tau ,... ,x_{d}),\>\>\>f\in L_{p},\>1\leq p\leq \infty,
\end{equation}
whose infinitesimal operators are partial derivatives  $\partial/\partial x_{j},\>1\leq j\leq d$.
The corresponding spaces $PW_{\sigma}$ and spaces on trigonometric polynomials can be introduced in terms of the Laplace operator $\Delta=\partial_{1}^{2}+...+\partial_{d}^{2}$.

The main objective of this paper is to develop a unified approach to Jackson-type estimates in a Hilbert space in which space  in which a family of strongly continuous bounded semigroups is given. It allows us to  consider an appropriate notion  of  Paley-Wiener vectors and  a modulus of continuity in a Hilbert space $\mathbf{H}$ space of unitary representation of a Lie group $G$ (see definitions below) and to prove an analog of the Jackson inequality (\ref{classical}) in a such general setting. We  apply these results to function spaces on homogeneous manifolds, i.e. to manifolds which have many symmetries. Our development is extensively using the notion of Peetre's K-functional.

Note that an approach to a generalization of the classical approximation theory and K-functional to abstract spaces in which a strongly continuous bounded representation of a Lie group is given was outlined  without complete proofs in \cite{Pes1}-\cite{Pes6}. The problem of developing approximation theory and K-functional in non-classical  settings  attracted attention of many mathematicians and in particular was treated in \cite{D, DX1, DX2, D1, D2, FFP,  KR, NRT1, NRT2, O,  Pes8}.

\bigskip

We consider a Hilbert space $\mathbf{H}$ and operators $D_{1}, D_{2},...,D_{d}$ which generate strongly continuous uniformly bounded semigroups $T_{1}(t), T_{2}(t),...,T_{d}(t), \>\> \|T(t)\|\leq 1, \>\>t\geq 0,$  (see \cite{BB} for the general theory of the one-parameter semigroups).  An analog of a Sobolev space is introduced as the space $\mathbf{H}^{r}$ of vectors in $\mathbf{H} $ for which the following norm is finite
$$
|||f|||_{\mathbf{H}^{r}}=\|f\|_{\mathbf{H}}+\sum_{k=1}^{r}\sum_{1\leq j_{1},  ...j_{k}\leq d}\|D_{j_{1}}...D_{j_{k}}f\|_{\mathbf{H}},
$$
 where $r\in \mathbb{N},\>\> f\in \mathbf{H}.$
  By using the closed graph theorem and
the fact that each $D_{i}$ is a closed operator in $\mathbf{H}$,\ one can show that this norm
 is equivalent to the norm
\begin{equation}\label{Sob-0}
\|f\|_{r}=\|f\|_{\mathbf{H}}+\sum_{1\leq i_{1},..., i_{r}\leq
d}\|D_{i_{1}}...D_{i_{k}}f\|_{\mathbf{H}},\>\> ~ r\in \mathbb{N}.
\end{equation}
  Let $\mathcal{D}(D_{i})$ be the domain of the operator $D_{i}$.
For every $f\in \mathbf{H}$ we introduce a vector-valued function
$
Tf: \mathbb{R}^{d}\longmapsto \mathbf{H}
$
defined as
$
Tf(t_{1}, t_{2},  ..., t_{d})=T_{1}(t_{1}) T_{2}(t_{2})  ... T_{d}(t_{d})f.
$

{\bf Assumptions.}

\textit{ Our main assumption is that we consider a Hilbert space $\mathbf{H}$ and operators $D_{1}, D_{2},...,D_{d}$ which generate strongly continuous uniformly bounded semigroups $T_{1}(t), T_{2}(t),...,T_{d}(t), \>\> \|T(t)\|\leq 1, \>\>t\geq 0,$ such that the following properties hold:}

\bigskip

 \textit{(a) There exists a set $\mathcal{G}\subset \mathbf{H}^{1}= \bigcap_{i=1}^{d}\mathcal{D}(D_{i})$ which  is dense in $\mathbf{H}$ and  invariant with respect to all $T_{i}(t), \>\>1\leq i\leq d,\>\>t\geq 0.$ }

 \textit{(b)  For every $1\leq i\leq d,$ every $f\in \mathcal{G}$ and all $\mathbf{t}=(t_{1}, ..., t_{d})$ in the standard open unit ball $U$ in $\mathbb{R}^{d}$}

\begin{equation}\label{assumpt-1}
D_{i}Tf(t_{1}, ..., t_{d})=\sum_{k=1}^{d}\zeta^{k}_{i}(\mathbf{t})\left(\partial_{k} Tf\right)(t_{1}, ..., t_{d}),
\end{equation}
\textit{where $\zeta^{k}_{i}(\mathbf{t})$ belong to $C^{\infty}(U),\>\partial_{k}=\frac{\partial}{\partial t_{k}}.$}

 \textit{(c)  The operator $L=-D_{1}^{2}-... -D_{d}^{2}$
is a non-negative self-adjoint operator in $\mathbf{H}$ and the domain of $L^{r/2},\>r\in \mathbb{N},$ with the graph norm $\|f\|+\|L^{r/2}f\|$ coincides with the space $\mathbf{H}^{r}$ with the norm (1.3).}

Using the groups  $T_{1},..., T_{d},\>d\geq n=dim\>M,$  we define an analog of the  modulus of continuity  by the formula
\begin{equation}\label{Mod-0}
\Omega^{r}( s, f)=
 $$
 $$
 \sum_{1\leq j_{1},...,j_{r}\leq
d}\sup_{0\leq\tau_{j_{1}}\leq s}...\sup_{0\leq\tau_{j_{r}}\leq
s}   \|
\left(T_{j_{1}}(\tau_{j_{1}})-I\right)
...\left(T_{j_{r}}(\tau_{j_{r}})-I\right)f \|_{{\bf H}},
\label{M}
\end{equation}
where $\>f\in {\bf H}, 
\> r\in \mathbb{N},  $ and $I$ is the
identity operator in ${\bf H}.$

The following statement was proved in \cite{Pes1}, \cite{Pes4}, \cite{Pes8}: there exist positive  constants $c_{1},\>C_{1}$ such that for every $f\in \mathbf{H}$
\begin{equation}\label{double-ineq-0}
 c_{1}\Omega^{r}(s,f) \leq K(s^{r},f, {\bf H}, {\bf H}^{r})   \leq
C_{1}\left(\Omega^{r}(s,f)+ \min(s^{r}, 1)\|f\|_{{\bf H}}\right),
\end{equation}
where 
\begin{equation}\label{KfuncH}
K(s^{r},f, {\bf H}, {\bf H}^{r})=\inf_{g\in {\bf H}^{r}}\left(  \|f-g\|_{{\bf H}}+s^{r}\|g\|_{{\bf H}^{r}}\right).
\end{equation}
The property \textit{(d)} on the  list of properties allows us to utilize  the operator $L=-D_{1}^{2}-... -D_{d}^{2}$ to introduce  an analog of the Paley-Wiener  spaces $PW_{\sigma}(L),\>\>\sigma>0,$  which are used as the apparatus for approximation. Since one of our auxiliary results about such spaces Theorem \ref{j-1-group} holds 
 not only for the special operator $L$ but for a general non-negative self-adjoint operator $\mathcal{L}$ in ${\bf H}$, it makes sense to define Paley-Wiener spaces for a general self-adjoint non-negative operators.
\begin{defn}\label{PW} If $\mathcal{L}$ is a non-negative self-adjoint operator in a Hilbert space $\mathbf{H}$ then 
 $PW_{\sigma} \left(\mathcal{L}\right)\subset {\bf H}$ denote the image space of the projection operator ${\bf 1}_{[0,\>\sigma]}\left(\mathcal{L}\right)$  to be understood in the sense of Borel functional calculus for self-adjoint operators. 
 \end{defn}
It is obvious that
 the space $PW_{\sigma}\left(\mathcal{L}\right)$ is a linear closed subspace in
$\mathbf{H}$ and  the space 
 $\bigcup _{ \sigma >0}PW_{\sigma}\left(\mathcal{L}\right)$
 is dense in $\mathbf{H}$.
The following theorem contains generalizations of several results
from  classical harmonic analysis (in particular  the
Paley-Wiener theorem). It follows from our  results in
\cite{Pes3}, \cite{Pes5}, \cite{Pes6}, \cite{KP}.
\begin{thm}\label{PWproprties}
The following statements hold:
\begin{enumerate}
\item (Bernstein inequality)   $f \in PW_{\sigma}\left(\mathcal{L}\right)$ if and only if
$ f \in \mathbf{H}^{\infty}=\bigcap_{k=1}^{\infty}\mathbf{H}^{k}$,
and the following Bernstein inequalities  holds true
\begin{equation}\label{Bern0}
\|\mathcal{L}^{k/2}f\|_{\mathbf{H}} \leq \sigma^{k}\|f\|_{\mathbf{H}}  \quad \mbox{for all} \, \,  k\in \mathbb{N};
\end{equation}
 \item  (Paley-Wiener theorem) $f \in PW_{\sigma}\left(\mathcal{L}\right)$
  if and only if for every $g\in\mathbf{H}$ the scalar-valued function of the real variable  $ t \mapsto
\langle e^{it\mathcal{L}}f,g \rangle $
 is bounded on the real line and has an extension to the complex
plane as an entire function of the exponential type $\sigma$.

\end{enumerate}
\end{thm}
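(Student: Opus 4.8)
The plan is to reduce both statements to classical one-dimensional facts by means of the spectral theorem for the self-adjoint operator $\mathcal{L}$. Write $\{E_\lambda\}$ for its spectral resolution of the identity, so that $\mathcal{L}=\int_0^\infty\lambda\,dE_\lambda$ and, for $f,g\in\mathbf{H}$, the set function $\mu_{f,g}=\langle E_{(\cdot)}f,g\rangle$ is a complex Borel measure on $[0,\infty)$ of total variation at most $\|f\|\,\|g\|$; put $\mu_f=\mu_{f,f}\ge0$. By Definition~\ref{PW}, $f\in PW_\sigma(\mathcal{L})$ precisely when $\mu_f$, equivalently every $\mu_{f,g}$, is supported in $[0,\sigma]$, and I would keep this reformulation in the foreground throughout.

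For statement (1), the forward implication is a direct spectral computation: if $\mu_f$ is supported in $[0,\sigma]$ then $\|\mathcal{L}^{s/2}f\|^2=\int_0^{\sigma}\lambda^{s}\,d\mu_f(\lambda)$ is finite and controlled by a power of $\sigma$ times $\|f\|^2$ for every $s\ge0$, which both shows $f\in\mathbf{H}^\infty$ (via the identification of $\mathbf{H}^r$ with $\mathcal{D}(\mathcal{L}^{r/2})$ in property~(d)) and yields (\ref{Bern0}). For the converse I would argue by contradiction: were $\mu_f$ to charge some interval $(\sigma',\infty)$ with $\sigma'>\sigma$, then $\|\mathcal{L}^{s/2}f\|^2\ge(\sigma')^{s}\,\mu_f((\sigma',\infty))$ would grow in $s$ faster than (\ref{Bern0}) permits, and letting $s\to\infty$ forces $\mu_f$ to be supported in $[0,\sigma]$, i.e. $f\in PW_\sigma(\mathcal{L})$.

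For statement (2), I would fix $g\in\mathbf{H}$ and observe that $\langle e^{it\mathcal{L}}f,g\rangle=\int_0^\infty e^{it\lambda}\,d\mu_{f,g}(\lambda)$ is the Fourier--Stieltjes transform of the finite measure $\mu_{f,g}$. If $f\in PW_\sigma(\mathcal{L})$, then $\mu_{f,g}$ is carried by $[0,\sigma]$, so $z\mapsto\int_0^\sigma e^{iz\lambda}\,d\mu_{f,g}(\lambda)$ is an entire extension bounded by $\|f\|\,\|g\|$ on $\mathbb{R}$ and by $\|f\|\,\|g\|\,e^{\sigma|\mathrm{Im}\,z|}$ on $\mathbb{C}$, hence of exponential type at most $\sigma$. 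Conversely, assume that for every $g$ the function $F_g(t)=\langle e^{it\mathcal{L}}f,g\rangle$ is bounded on $\mathbb{R}$ and extends to an entire function of exponential type $\sigma$. Then $F_g$ is a tempered distribution whose Fourier transform, by the Paley--Wiener--Schwartz theorem, is supported in $[-\sigma,\sigma]$; up to the usual reflection and normalization this Fourier transform is exactly $\mu_{f,g}$, which is also supported in $[0,\infty)$, so $\mu_{f,g}$ is carried by $[0,\sigma]$. Since this holds for all $g$ we get $\langle E_{(\sigma,\infty)}f,g\rangle=0$ for all $g$, whence $E_{(\sigma,\infty)}f=0$ and $f\in PW_\sigma(\mathcal{L})$.

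I expect the converse half of (2) to be the only genuinely delicate point: one must pass rigorously from the pointwise hypothesis ``entire of exponential type $\sigma$, bounded on the line'' to a support statement for the spectral measure. Invoking Paley--Wiener--Schwartz for tempered distributions is the cleanest route; a purely measure-theoretic alternative is the classical fact that a finite measure whose Fourier--Stieltjes transform is bounded on $\mathbb{R}$ and extends to an entire function of exponential type $\sigma$ must be supported in $[-\sigma,\sigma]$. In either formulation a little care is warranted because $\mathcal{L}$ is unbounded, so I would either truncate with the projections $E_{[0,N]}$ and pass to the limit, or---preferably---work throughout with the finite measures $\mu_{f,g}$ and their total-variation bounds, which makes the interchanges of integration and the convergence of the entire extensions automatic.
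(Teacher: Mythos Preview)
Your spectral-theoretic approach is correct and is the standard route to both statements. Note, however, that the paper does not actually prove Theorem~\ref{PWproprties}: it simply records that the result ``follows from our results in \cite{Pes3}, \cite{Pes95}, \cite{Pes6}, \cite{KP}'' and moves on. There is therefore no in-text argument to compare yours against; what you have written is essentially the reconstruction one would give from first principles.

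One small remark on (1): with Definition~\ref{PW} taken literally (spectral projection of $\mathcal{L}$ onto $[0,\sigma]$), your computation $\|\mathcal{L}^{s/2}f\|^{2}=\int_{0}^{\sigma}\lambda^{s}\,d\mu_{f}\le\sigma^{s}\|f\|^{2}$ yields the sharp bound $\|\mathcal{L}^{s/2}f\|\le\sigma^{s/2}\|f\|$, not $\sigma^{s}$ as printed in (\ref{Bern0}); the printed exponent is either a typo or reflects an implicit $[0,\sigma^{2}]$ normalization. Your contradiction argument for the converse is correct once this exponent is made consistent---as written, comparing $(\sigma')^{s}$ against the square of (\ref{Bern0}), namely $\sigma^{2s}$, would only force $\mu_{f}$ to be supported in $[0,\sigma^{2}]$. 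With the corrected exponent the argument closes exactly as you describe. Your handling of (2), reducing the converse to Paley--Wiener--Schwartz applied to the finite spectral measures $\mu_{f,g}$, is clean and needs no truncation since those measures already have bounded total variation.
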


Next, we define the best approximation
\begin{equation}\label{appr}
\mathcal{E}_{\mathcal{L}}(\sigma, f)=\inf_{g\in
PW_{\sigma}\left(\mathcal{L}\right)}\|f-g\|=\left \|f-\mathcal{P}_{\sigma}f\right\|,
\end{equation}
where $\mathcal{P}_{\sigma}$ is the orthogonal projector of $\mathbf{H}$ onto $PW_{\sigma}\left(\mathcal{L}\right)$
We also using  the Schr\"{o}dinger  group $e^{it\mathcal{L}}$ to  introduce  the modulus of continuity
\begin{equation}\label{mod-L}
\omega_{\mathcal{L}}^{r}(t,f)=\sup_{0\leq \tau\leq t}\left\|\left(e^{it\mathcal{L}}-I\right)^{r}f\right\|.
\end{equation}
In section \ref{J-S} in Theorem \ref{j-1-group} we prove  the following  Jackson-type estimate which holds for any self-adjoint operator $\mathcal{L}$
\begin{equation}\label{Schrod}
\mathcal{E}_{\mathcal{L}}(\sigma, f)\leq C(\mathcal{L})\omega_{\mathcal{L}}^{r}(\sigma^{-1}, f).
\end{equation}
Note, that (\ref{double-ineq-0}) contains   the following known  fact  (see \cite{BB})
\begin{equation}\label{K-L-ineq}
\omega_{\mathcal{L}}^{r}(s, f)\leq c_{2}K\left(s^{r},f, {\bf H}, \mathcal{D}(\mathcal{L}^{r/2})\right)\leq C_{2}\left(\omega_{\mathcal{L}}^{r}(s, f)+\min(s^{r}, 1) \|f\|\right),
\end{equation}
where $\mathcal{D}(\mathcal{L}^{r/2}) $ is  the domain  of the operator $\mathcal{L}^{r/2}$ with the graph norm $\|f||+\|\mathcal{L}^{r/2}f\|$. 

According to the assumption \textit{(d)} the graph norm of the domain $\mathcal{D}(L^{r/2})$ of the operator $L^{r/2}$ is equivalent to the norm (\ref{Sob-0}) and the spaces $\mathcal{D}(L^{r/2})$ and ${\bf H}^{r}$  coincide. It implies, in particular, existence of a constant $C_{3}>0$ such that 
 \begin{equation}\label{C3}
 K\left(s^{r},f, {\bf H}, \mathcal{D}(L^{r/2})\right)\leq C_{3}K\left(s^{r},f, {\bf H}, {\bf H}^{r}\right),\>\>\>f\in {\bf H}.
 \end{equation}
Thus in our specific situation we obtain by using (\ref{Schrod}), (\ref{K-L-ineq}), (\ref{C3}), and (\ref{double-ineq-0})
$$
\mathcal{E}_{L}(\sigma, f)\leq C(L)\omega_{L}^{r}(\sigma^{-1}, f)\leq C(L)c_{2}K\left(\sigma^{-r},f, {\bf H}, \mathcal{D}(L^{r/2})\right)\leq 
$$
$$
C(L)c_{2}C_{3}K\left(\sigma^{-r},f, {\bf H}, {\bf H}^{r}\right)\leq C(L)c_{2}C_{3}C_{1}\left(\Omega^{r}(\sigma^{-1},f)+ \min(\sigma^{-r}, 1)\|f\|_{{\bf H}}\right)
$$
Now we can formulate our main theorem.

\begin{thm}\label{main-thm}If the assumptions \textit{(a)-(d)} are satisfied then there exists a constant $C>0$ which is independent on $f\in {\bf H}$ such that
\begin{equation}\label{main}
\mathcal{E}_{L}(\sigma, f)\leq C\left(\Omega^{r}( \sigma^{-1}, f)+\min (\sigma^{-r}, 1)\|f\|\right),
\end{equation}
where $\mathcal{E}_{L}(\sigma, f)$ and $\Omega^{r}( \sigma^{-1}, f)$ defined in (\ref{appr}) and (\ref{Mod-0}) respectively.
\end{thm}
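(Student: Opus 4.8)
The plan is to run, in full rigour, the chain of estimates already sketched before the statement: starting from the best approximation $\mathcal{E}_{L}(\sigma,f)$, one passes successively to the Schr\"{o}dinger-group modulus $\omega_{L}^{r}$, then to the $K$-functional of the pair $(\mathbf{H},\mathcal{D}(L^{r/2}))$, then to the $K$-functional of the pair $(\mathbf{H},\mathbf{H}^{r})$, and finally to the modulus $\Omega^{r}$. Fix $r\in\mathbb{N}$, $\sigma>1$, and first $f\in\mathbf{H}^{r}$. I would apply Theorem \ref{j-1-group} with $\mathcal{L}=L$ to get $\mathcal{E}_{L}(\sigma,f)\le C(L)\,\omega_{L}^{r}(\sigma^{-1},f)$; then invoke the left-hand inequality of the standard interpolation fact (\ref{K-L-ineq}) with $s=\sigma^{-1}$ to reach $\omega_{L}^{r}(\sigma^{-1},f)\le c_{2}\,K(\sigma^{-r},f,\mathbf{H},\mathcal{D}(L^{r/2}))$; then use assumption \textit{(d)} in the precise form (\ref{C3}) — the norm equivalence between $\mathcal{D}(L^{r/2})$ and $\mathbf{H}^{r}$ — to replace this by $C_{3}\,K(\sigma^{-r},f,\mathbf{H},\mathbf{H}^{r})$; and finally apply the right-hand inequality of (\ref{double-ineq-0}) from Theorem \ref{K-mod}, again with $s=\sigma^{-1}$. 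Concatenating the four inequalities and using that $\min(\sigma^{-r},1)=\sigma^{-r}$ when $\sigma>1$ yields $\mathcal{E}_{L}(\sigma,f)\le C\big(\Omega^{r}(\sigma^{-1},f)+\sigma^{-r}\|f\|_{\mathbf{H}^{r}}\big)$ with $C=C(L)c_{2}C_{3}C_{1}$ independent of $f$.

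The one point needing extra care is that this chain delivers the term $\sigma^{-r}\|f\|_{\mathbf{H}^{r}}$ and is only meaningful for $f\in\mathbf{H}^{r}$, whereas (\ref{main}) asks for $\sigma^{-r}\|f\|_{\mathbf{H}}$ and for every $f\in\mathbf{H}$. To obtain the sharper term I would not quote (\ref{double-ineq-0}) as a black box but re-use the construction behind Theorem \ref{K-mod}: with $t=\sigma^{-1}$ the abstract Hardy-Steklov smoothing operator built from the transport identity (\ref{assumpt-1}) produces a vector $g_{t}\in\mathbf{H}^{r}$ with $\|f-g_{t}\|_{\mathbf{H}}\le c\,\Omega^{r}(t,f)$ and with its top-order seminorm bounded by $\Omega^{r}$ alone, $\sum_{1\le i_{1},\dots,i_{r}\le d}\|D_{i_{1}}\cdots D_{i_{r}}g_{t}\|_{\mathbf{H}}\le c\,t^{-r}\Omega^{r}(t,f)$, with no $\|f\|$ entering. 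Then $\|g_{t}\|_{\mathbf{H}}\le\|f\|_{\mathbf{H}}+c\,\Omega^{r}(t,f)$ by the triangle inequality, so $\|L^{r/2}g_{t}\|_{\mathbf{H}}\le c\,\|g_{t}\|_{\mathbf{H}^{r}}\le c\big(\|f\|_{\mathbf{H}}+t^{-r}\Omega^{r}(t,f)\big)$ by assumption \textit{(d)} (using $\Omega^{r}\le t^{-r}\Omega^{r}$ for $t\le 1$); combining $\mathcal{E}_{L}(\sigma,f)\le\|f-g_{t}\|_{\mathbf{H}}+\mathcal{E}_{L}(\sigma,g_{t})$ with the spectral (Bernstein-type) bound $\mathcal{E}_{L}(\sigma,g_{t})\le c\,\sigma^{-r}\|L^{r/2}g_{t}\|_{\mathbf{H}}$, which is pure Borel functional calculus for $L$, and inserting $t=\sigma^{-1}$ collapses everything to the asserted estimate, now valid for all $f\in\mathbf{H}$.

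The genuinely hard work is not in this assembly but in the two auxiliary results it consumes. Theorem \ref{j-1-group}, the abstract Jackson estimate via the Schr\"{o}dinger group, rests on the Borel functional calculus together with a careful pointwise estimate of $|e^{i\tau\lambda}-1|^{r}$ against powers of $\lambda$ integrated against the spectral measure; Theorem \ref{K-mod}, the equivalence of $\Omega^{r}$ with the $K$-functional of $(\mathbf{H},\mathbf{H}^{r})$, requires constructing the abstract Hardy-Steklov operator out of (\ref{assumpt-1}) and verifying its mapping properties between $\mathbf{H}$ and $\mathbf{H}^{r}$ — and it is exactly this operator that I would reuse to remove the superfluous $\|f\|_{\mathbf{H}^{r}}$ in favor of $\|f\|_{\mathbf{H}}$. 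Granting those two theorems, the derivation of (\ref{main}) is bookkeeping of the constants produced along the chain plus the elementary reduction $\min(\sigma^{-r},1)=\sigma^{-r}$ in the stated range $\sigma>1$.
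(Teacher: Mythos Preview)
Your chain of inequalities is exactly the one the paper uses: Theorem~\ref{j-1-group} $\Rightarrow$ (\ref{K-L-ineq}) $\Rightarrow$ (\ref{C3}) $\Rightarrow$ Theorem~\ref{K-mod}, followed by $\min(\sigma^{-r},1)=\sigma^{-r}$ for $\sigma>1$. So the approach is the same.

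The extra work in your second paragraph is unnecessary, though not wrong. You were misled by the display (\ref{double-ineq-0}) in the introduction, which carries $\|f\|_{\mathbf{H}^{r}}$; the statement actually proved in Theorem~\ref{K-mod} (equation (\ref{double-ineq})) already has $\|f\|_{\mathbf{H}}$ on the right, valid for every $f\in\mathbf{H}$. What you do in the second paragraph --- feed the Hardy--Steklov vector $g_{t}=H_{r}(t)f$ directly into the spectral Favard-type bound $\mathcal{E}_{L}(\sigma,g_{t})\le c\sigma^{-r}\|L^{r/2}g_{t}\|$ --- is essentially re-proving that part of Theorem~\ref{K-mod} and splicing it into the chain at a different point. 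One small inaccuracy: your claim that the top-order seminorm of $g_{t}$ is bounded by $t^{-r}\Omega^{r}(t,f)$ ``with no $\|f\|$ entering'' is not what Theorem~\ref{thHS} gives; the estimate (\ref{difestim}) produces terms $s^{-l}\Omega^{l}(s,f)$ for various $l\le r$ (including $l=0$, i.e.\ an $\|f\|$ contribution), which are then traded for $\Omega^{r}$ plus $\|f\|$ via the Marchaud-type inequality quoted at the end of the proof of Theorem~\ref{K-mod}. Since you reinstate the $\|f\|$ term anyway through $\|g_{t}\|_{\mathbf{H}}\le\|f\|+c\,\Omega^{r}$, the slip is harmless and your final bound is the same as the paper's.
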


\begin{remark}
It is important to notice that since $\Omega^{r}(f, \tau)$ cannot be of order $o(\tau^{r})$ when $\tau\rightarrow 0$ (unless $f$ is invariant), the behavior of the right-hand side in (\ref{main}) is determined by the first term when $\sigma\rightarrow \infty$. In particular, if $f\in {\bf H}^{r}$, then due to the inequality
$$
\Omega^{r}(s, f)\leq s^{-k}\Omega^{r-k}(s, D_{j_{1}}...D_{j_{k}}f),\>\>\>\>\>0\leq k\leq r,
$$
one has the best possible estimate
$$
\mathcal{E}_{L}(\sigma, f)\leq C\Omega^{r}(\sigma, f)\leq C\sigma^{-r}\|f\|_{{\bf H}^{r}}.
$$

\end{remark}

\section{  Jackson inequality for the  Schr\"{o}dinger group of a self-adjoint operator }\label{J-S}

The next lemma shows that the inequality (\ref{Bern0}) can be relaxed. This fact is used in the proof of  Lemma \ref{Lem2} which suggests  a way of construction of Paley-Wiener vectors.

\begin{lem}\label{Lem1}
If  $\mathcal{L}$ is a self-adjoint operator in a Hilbert space ${\bf H}$ then a vector $f$ belongs to 
the subspace $PW_{\sigma}(\mathcal{L}),\>\sigma>0, $ if and only if there exists a constant $C=C(f,\sigma)>0$ such that for all $k\in \mathbb{N}$ 
\begin{equation}\label{C-ineq}
\|\mathcal{L}^{k/2}f\|\leq C\sigma^{k}\|f\|.
\end{equation}

\end{lem}

\begin{proof} If $f\in PW_{\sigma}(\mathcal{L})$ then by (\ref{Bern0}) the inequality (\ref{C-ineq}) holds with $C=1$. Conversely, 
if  for an $f\in {\bf H}$  the inequality (\ref{C-ineq}) holds  for some $C=C(f,\sigma)$ then  for
any complex number $z$ we have

$$ 
\left\|e^{z\mathcal{L}}f\right\|=\left\|\sum ^{\infty}_{m=0}(z^{m}\mathcal{L}^{m}f)/m!\right\|\leq C \sum
^{\infty}_{m=0}|z|^{m}\sigma^{m}/m!=Ce^{|z|\sigma}.
$$
It implies that for any functional $\psi^{*}\in E^{*}$ the scalar function
$\left<e^{z\mathcal{L}}f,\psi^{*}\right>$ is an entire function
 of exponential type $\sigma $ which is bounded on the real axis 
by the constant $\|\psi^{*}\| \|f\|$.
 An application of the classical Bernstein inequality gives

$$\left\|\left<e^{t\mathcal{L}}\mathcal{L}^{k}f,\psi^{*}\right>\right\|_{C(R^{1})}=\left\|\left(\frac{d}{dt}\right)^{k}\left<e^{t\mathcal{L}}f,\psi^{*}\right>\right\|_{
C(R^{1})} \leq\sigma^{k}\|\psi^{*}\| \|f\|.$$
From here  for $t=0$ we obtain 
$$ \left|\left<\mathcal{L}^{k}f,\psi^{*}\right>\right|\leq \sigma ^{k} \|\psi^{*}\| \|f\|.
$$
Choice of $\psi^{*}\in E^{*}$ such that $\|\psi^{*}\|=1$ and $\left<\mathcal{L}^{k}f,\psi^{*}\right>=\|\mathcal{L}^{k}f\|$ gives
the inequality $\|\mathcal{L}^{k}f\|\leq
 \sigma ^{k} \|f\|,\>\> k\in \mathbb{N}$, which implies Theorem.
\end{proof}

\begin{lem}\label{Lem2}
If  $\mathcal{L}$ is a self-adjoint operator in a Hilbert space ${\bf H}$ and  $p\in L_{1}(\mathbb{R})$ is an entire function of exponential
type $\sigma$ then for any $f\in {\bf H}$ the vector
$$
P_{\sigma}f=\int _{-\infty}^{\infty}p(t)e^{it\mathcal{L}}fdt
$$
belongs to $PW_{\sigma}(\mathcal{L}).$

\end{lem}
\begin{proof}

For $g=P_{\sigma}f,\>f\in {\bf H},$ and for every real $\tau$ we
have
$$
e^{i\tau
\mathcal{L}}g=\int_{-\infty}^{\infty}p(t)e^{i(t+\tau)\mathcal{L}}fdt=\int_{-\infty}^{\infty}p(
t-\tau)e^{it\mathcal{L}}fdt.
$$
 Using this formula we can extend the abstract function $e^{i\tau \mathcal{L}}g$ to the
complex plane as
$$
e^{iz\mathcal{L}}g=\int_{-\infty}^{\infty}p(t-z)e^{it\mathcal{L}}fdt.
$$
One has 
$$
\|e^{iz\mathcal{L}}g\|\leq
\|f\|\int_{-\infty}^{\infty}|p(t-z)|dt.
 $$
 Since by assumption $p\in L_{1}(\mathbb{R})$ is an entire function of exponential
type $\sigma$
 we have for $z=x+iy$ and $u=t-x$
 $$
\int_{-\infty}^{\infty}|p(t-z)|dt= \int_{-\infty}^{\infty}|p(u-iy)|du\leq e^{\sigma |y|}\|p\|_{L_{1}(\mathbb{R})},
 $$
 because
  $$
  p(u-iy)=\sum_{m=0}^{\infty}  \frac{(-iy)^{m}}{m!}p^{(m)}(u),
  $$
  and according to the classical Bernstein inequality
  $$
  \>\>\>\>\|p^{(m)}\|_{L_{1}(\mathbb{R})}\leq \sigma^{m} \|p\|_{L_{1}(\mathbb{R})}.
  $$
  Thus
$$
\|e^{iz\mathcal{L}}g\|\leq
\|f\|\int_{-\infty}^{\infty}|p(t-z)|dt\leq\|f\|e^{\sigma |y|}
\|h\|_{L_{1}}.
 $$
It shows that for every vector $h\in {\bf H}$ the function
$\left<e^{iz\mathcal{L}}g,h\right>$ is an entire function and
$$
\left|\left<e^{iz\mathcal{L}}g,h\right>\right|\leq
\|h\|\|f\|e^{\sigma |y|}\|f\|_{L_{1}(\mathbb{R})}.
$$
In other words the $\left<e^{iz\mathcal{L}}g,h\right>$ is an entire
function of the exponential type $\sigma$
  which is bounded on the real line and another application of the classical
Bernstein inequality in the norm $C(\mathbb{R})$  gives the inequality
$$
\left|\left(\frac{d}{dt}\right)^{k}\left<e^{it\mathcal{L}}g,h\right>\right|
\leq\sigma^{k}\sup_{t\in\mathbb{R}}\left|\left<e^{it\mathcal{L}}g,h\right>\right|.
$$
Since
$$
\left(\frac{d}{dt}\right)^{k}\left<e^{it\mathcal{L}}g,h\right>=\left<e^{it\mathcal{L}}(i\mathcal{L})^{k}g,h\right>
$$
we obtain for $t=0$
$$
\left|\left<\mathcal{L}^{k}g,h\right>\right|\leq
\sigma^{k}\|h\|\|f\|\int_{-\infty}^{\infty}|p(\tau)| d\tau.
$$
 Choosing $h$ such that $\|h\|=1$ and $\left<\mathcal{L}^{k}g,h\right>=\|\mathcal{L}^{k}g\|$
  we obtain the inequality
$$
\|\mathcal{L}^{k}g\|\leq
\sigma^{k}\|f\|\int_{-\infty}^{\infty}|p(\tau)|d\tau,\>\>\>k\in \mathbb{N},
$$
which  according to Lemma \ref{Lem1}  implies that $g$ belongs to $PW_{\sigma}(\mathcal{L})$. Lemma is proven.

\end{proof}

For the modulus of continuity introduced in (\ref{mod-L}) the following inequalities hold:
\begin{equation}
\omega_{\mathcal{L}}^{m}\left( s, f\right)\leq s^{k}\omega^{m-k}_{\mathcal{L}}(s, \mathcal{L}^{k}f)\label{first},\>\>\>\>0\leq k\leq m,
\end{equation}
and
\begin{equation}
\omega_{\mathcal{L}}^{m}\left(as, f\right)\leq \left(1+a\right)^{m}\omega_{\mathcal{L}}^{m}(s, f), \>\>\>a\in \mathbb{R}_{+}.\label{second}
\end{equation}
The first one follows from the identity
\begin{equation}
\left(e^{is \mathcal{L}}-I\right)^{k}f=\int_{0}^{s}...\int_{0}^{s}e^{i(\tau_{1}+...\tau_{k})\mathcal{L}}
\mathcal{L}^{k}fd\tau_{1}...d\tau_{k},\label{id3}
\end{equation}
where $I$ is the identity operator and $k\in \mathbb{N}$.  The
second one follows from the property
$$
\omega_{\mathcal{L}}^{1}\left( s_{1}+s_{2}, f\right)\leq\omega_{\mathcal{L}}^{1}\left(
s_{1}, f\right)+\omega_{\mathcal{L}}^{1}\left(s_{2}, f\right)
$$
which is easy to verify. 
The next Theorem and its proof are motivated by  Theorem 5.2.1  in \cite{N}.

\begin{thm}\label{j-1-group} Let  $\mathcal{L}$ be a self-adjoint operator in a Hilbert space ${\bf H}$. For a given natural $m$ there exists a constant $c=c(m)>0$ such that for all
$\sigma>0$ and all $f$ in $\mathbf{H}$ 
\begin{equation}\label{100}
\mathcal{E}_{\mathcal{L}}(\sigma, f)\leq
c\omega^{m}_{\mathcal{L}}\left( 1/\sigma, f\right).
\end{equation}
Moreover, for any $1\leq k\leq m$ there exists a $C=C(m,k)>0$ such that for any  $f\in \mathcal{D}(\mathcal{L}^{k})$ one has
\begin{equation}
\mathcal{E}_{\mathcal{L}}(\sigma, f)\leq
\frac{C}{\sigma^{k}}\omega^{m-k}_{\mathcal{L}}\left( 1/\sigma, \mathcal{L}^{k}f\right),\>\>\>\>
0\leq k\leq m.\label{200}
\end{equation}
\end{thm}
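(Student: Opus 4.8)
The plan is to establish the Jackson estimate~(\ref{100}) by a classical construction: build a family of operators $R_{\sigma}\colon \mathbf{H}\to PW_{\sigma}(\mathcal{L})$ that reproduce Paley--Wiener vectors of bandwidth $\leq \sigma$, are uniformly bounded, and satisfy $\|f-R_{\sigma}f\|\leq c\,\omega^{m}_{\mathcal{L}}(1/\sigma,f)$. The natural candidate is a spectral multiplier. Using the Borel functional calculus for $\mathcal{L}$, write $\mathbf{1}_{[0,\sigma]}(\mathcal{L})$ as the spectral projector $\mathcal{P}_{\sigma}$, so that $\mathcal{E}_{\mathcal{L}}(\sigma,f)=\|f-\mathcal{P}_{\sigma}f\|=\|(\mathbf{1}-\mathbf{1}_{[0,\sigma]})(\mathcal{L})f\|$. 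I would then look for a bounded Borel function $\Phi_{\sigma}$ that equals $1$ on $[0,\sigma]$ and relate $f-\Phi_{\sigma}(\mathcal{L})f$ to the $m$-th power $(e^{i\tau\mathcal{L}}-I)^{m}$. Equivalently, since $\mathcal{P}_{\sigma}$ is the \emph{optimal} projector, it suffices to bound $\|f-\Phi_{\sigma}(\mathcal{L})f\|$ for \emph{any} admissible $\Phi_{\sigma}$, because $\mathcal{E}_{\mathcal{L}}(\sigma,f)\leq \|f-\Phi_{\sigma}(\mathcal{L})f\|$ whenever $\Phi_{\sigma}(\mathcal{L})f\in PW_{\sigma}(\mathcal{L})$.

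The key algebraic step is to note that for a fixed $\lambda$ in the spectrum of $\mathcal{L}$, the function $\tau\mapsto (e^{i\tau\lambda}-1)^{m}$ is, up to a harmless exponential factor, comparable to $(\tau\lambda)^{m}$ for $|\tau\lambda|$ small and bounded below (after averaging over $\tau$) by a constant when $|\lambda|\gtrsim 1/\tau$; this is the scalar heart of the Jackson inequality. Concretely, I would use a kernel of the form
\begin{equation}
f-\Phi_{\sigma}(\mathcal{L})f = \int_{\mathbb{R}} K_{\sigma}(\tau)\,(e^{i\tau\mathcal{L}}-I)^{m}f\,d\tau,
\end{equation}
where $K_{\sigma}(\tau)=\sigma\,K(\sigma\tau)$ is an $L^{1}$ rescaling of a fixed kernel $K$ chosen so that $\widehat{K}$ vanishes to order $m$ at $0$ and so that the resulting multiplier $1-\Phi_{\sigma}$ is supported in $\{\lambda>\sigma\}$ (or, more flexibly, in $\{\lambda>c\sigma\}$, which only changes constants since one may rescale $\sigma$). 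Taking norms and using $\|e^{i\tau\mathcal{L}}g\|=\|g\|$ together with $\sup_{|\tau|\le 1/\sigma}\|(e^{i\tau\mathcal{L}}-I)^{m}f\|=\omega^{m}_{\mathcal{L}}(1/\sigma,f)$ and the subadditivity estimate~(\ref{second}) to control the contribution of $|\tau|>1/\sigma$, one obtains $\|f-\Phi_{\sigma}(\mathcal{L})f\|\le c(m)\,\omega^{m}_{\mathcal{L}}(1/\sigma,f)$, which is~(\ref{100}).

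For the refined estimate~(\ref{200}), given $f\in\mathcal{D}(\mathcal{L}^{k})$ I would simply combine~(\ref{100}), applied with $m-k$ in place of $m$ and to the vector $\mathcal{L}^{k}f$, with inequality~(\ref{first}): since $\mathcal{P}_{\sigma}$ commutes with $\mathcal{L}^{k}$ on $\mathcal{D}(\mathcal{L}^{k})$ and $\mathcal{L}^{k}$ is bounded below by $\sigma^{k}$ on the orthogonal complement of $PW_{\sigma}(\mathcal{L})$ (by the spectral theorem), we get $\mathcal{E}_{\mathcal{L}}(\sigma,f)=\|(\mathbf{1}-\mathbf{1}_{[0,\sigma]})(\mathcal{L})f\|\le \sigma^{-k}\|(\mathbf{1}-\mathbf{1}_{[0,\sigma]})(\mathcal{L})\mathcal{L}^{k}f\|=\sigma^{-k}\mathcal{E}_{\mathcal{L}}(\sigma,\mathcal{L}^{k}f)$, and then~(\ref{100}) for $\mathcal{L}^{k}f$ with order $m-k$ finishes it. The main obstacle I expect is the explicit construction of the kernel $K$ (equivalently, the admissible multiplier $\Phi_{\sigma}$): one needs $\widehat{K}$ smooth, vanishing to order $m$ at the origin, equal to $1$ on a neighborhood of $0$ scaled appropriately, and $K\in L^{1}(\mathbb{R})$ with a dilation-invariant $L^{1}$ bound — a standard but slightly delicate Fourier-analytic lemma. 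Everything else reduces to the norm estimates above and the already-established spectral-calculus properties of $PW_{\sigma}(\mathcal{L})$ from Theorem~\ref{PWproprties}.
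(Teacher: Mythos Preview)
Your approach is essentially the paper's: both build an approximant $g\in PW_{\sigma}(\mathcal{L})$ of the form $g=f+\mathrm{const}\cdot\int K_{\sigma}(\tau)\,(e^{i\tau\mathcal{L}}-I)^{m}f\,d\tau$ and then estimate $\|f-g\|$ using the subadditivity~(\ref{second}). The paper chooses the explicit kernel $h(t)=a\bigl(\sin(t/n)/t\bigr)^{n}$ with $n=2(m+3)$ and verifies $g\in PW_{\sigma}(\mathcal{L})$ via the entire-function characterization of Theorem~\ref{PWproprties}(2) (showing $z\mapsto\langle e^{iz\mathcal{L}}g,g^{*}\rangle$ has exponential type $\sigma$), whereas you propose to check spectral support of $\Phi_{\sigma}$ directly through Borel calculus; the two viewpoints are equivalent, but the paper's avoids having to compute the multiplier.

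One point in your plan is muddled: the conditions you impose on $\widehat{K}$ (``vanishing to order $m$ at the origin'' and ``equal to $1$ on a neighborhood of $0$'') are mutually inconsistent and are not what is actually needed. The correct requirements on the kernel are that $K\in L^{1}(\mathbb{R})$ with $\widehat{K}$ compactly supported (equivalently, $K$ entire of finite exponential type), $\int K$ suitably normalized, and $\int |K(t)|(1+|t|)^{m}\,dt<\infty$ so that the appeal to~(\ref{second}) yields a finite constant; the order-$m$ vanishing of $1-\Phi_{\sigma}$ at $\lambda=0$ comes automatically from the factor $(e^{i\tau\lambda}-1)^{m}$, not from $\widehat{K}$. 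For~(\ref{200}), your spectral lower-bound argument $\mathcal{E}_{\mathcal{L}}(\sigma,f)\le\sigma^{-k}\mathcal{E}_{\mathcal{L}}(\sigma,\mathcal{L}^{k}f)$ followed by~(\ref{100}) with order $m-k$ is correct and arguably cleaner than the paper's route, which instead inserts~(\ref{first}) directly inside the integral before invoking~(\ref{second}).
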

\begin{proof}

Let
\begin{equation}
\rho(t)=a\left(\frac{\sin (t/n)}{t}\right)^{n}
\end{equation}
where $n=2( m+3)$  and
$$
a=\left(\int_{-\infty}^{\infty}\left(\frac{\sin
(t/n)}{t}\right)^{n}dt\right)^{-1}.
$$
With such choice of $a$ and $n$ function $\rho$ will have the
 following properties:

(1) $\rho$ is an even nonnegative entire function of exponential
type one;

(2) $\rho$ belongs to $L_{1}(\mathbb{R})$ and its
$L_{1}(\mathbb{R})$-norm is $1$;

(3)  the integral
\begin{equation}
\int_{-\infty}^{\infty}\rho(t)|t|^{m}dt
\end{equation} 
is finite.

Next, we observe the following formula
\begin{equation}\label{binom}
(-1)^{m+1}(e^{si\mathcal{L}}-I)^{m}f=
$$
$$
(-1)^{m+1}\sum^{m}_{j=0}(-1)^{m-j}C^{j}_{m}e^{js(i\mathcal{L})}f=
 \sum_{j=1}^{m}b_{j}e^{js(i\mathcal{L})}f-f,
 \end{equation}
 where $b_{1}+b_{2}+ ... +b_{m}=1.$
Consider
 the vector 
\begin{equation}\label{id}
\mathcal{Q}_{\rho}^{\sigma,m}(f)=\int_{-\infty}^{\infty}
\rho(t)\left\{(-1)^{m+1}(e^{\frac{t}{\sigma}i\mathcal{L}}-I)^{m}f+f\right\}dt.
 \end{equation}
 According to (\ref{binom}) we have 
 $$
\mathcal{Q}_{\rho}^{\sigma,m}(f) =                  \int_{-\infty}^{\infty}
\rho(t)\sum_{j=1}^{m}b_{j}e^{j\frac{t}{\sigma}(i\mathcal{L})}fdt.
$$
Changing variables in each of integrals 
$$
\int_{-\infty}^{\infty}\rho(t)e^{j\frac{t}{\sigma}i\mathcal{L}}fdt,\>\>\>\>1\leq j\leq m,
$$
we obtain the formula
$$
\mathcal{Q}_{\rho}^{\sigma,m}(f) =\int_{-\infty}^{\infty}\Phi(t)e^{t(i\mathcal{L})}fdt,
$$
where
$$
\Phi(t)=\sum_{j=1}^{m}b_{j}\left(\frac{\sigma}{j}\right)\rho\left(t\frac{\sigma}{j}\right), \>\>\>\>\>\>b_{1}+b_{2}+...
+b_{m}=1.
 $$ 
Since the function $\rho(t)$ has exponential type one every function
$\rho(t\sigma/j)$ has the type $\sigma/j$ and because of  this 
the function $\Phi(t)$ 
has exponential  type $\sigma$. It  also belongs to $L_{1}(\mathbb{R})$ and as it was just shown it implies that the vector $\mathcal{Q}_{\rho}^{\sigma,m}(f) $ belongs to $PW_{\sigma}(\mathcal{L})$.
Now we estimate the error of approximation of
$\mathcal{Q}_{\rho}^{\sigma, m}(f)$  to $f$. 
 Since by (\ref{id})
$$
\mathcal{Q}_{\rho}^{\sigma,m}(f)-f=
(-1)^{m+1}\int_{-\infty}^{\infty}\rho(t)(e^{\frac{t}{\sigma}i\mathcal{L}}-I)^{m}fdt
$$
we obtain by using (\ref{second}) 
$$
\mathcal{E}_{\mathcal{L}}(\sigma, f)\leq\|f-\mathcal{Q}_{\rho}^{\sigma,m}(f)\|\leq
\int_{-\infty}^{\infty}\rho(t)\left    \|         (e^{\frac{t}{\sigma}i\mathcal{L}}-I)^{m}f\right\|     dt\leq
$$
$$
\int_{-\infty}^{\infty}\rho(t)\omega^{m}_{\mathcal{L}}\left( t/\sigma,\>f\right)dt \leq c\omega^{m}_{\mathcal{L}}\left(1/\sigma,\>f\right), \>\>\>\>\>\>\>c=\int_{-\infty}^{\infty}\rho(t)(1+|t|)^{m}dt.
$$
If $f\in \mathcal{D}(\mathcal{L}^{k})$ then by using (\ref{first}) 
we have
$$
\mathcal{E}_{\mathcal{L}}(\sigma, f)\leq
\int_{-\infty}^{\infty}\rho(t)\omega^{m}_{\mathcal{L}}\left(t/\sigma, f\right)dt
\leq 
$$
$$
\frac{\omega^{m-k}_{\mathcal{L}}\left(
1/\sigma, \mathcal{L}^{k}f\right)}{\sigma^{k}}\int_{-\infty}^{\infty}\rho(t)|t|^{k}(1+|t|)^{m-k}dt\leq
\frac{{C}}{\sigma^{k}}\omega^{m-k}_{\mathcal{L}}\left(
1/\omega, \mathcal{L}^{k}f\right),
$$
where
 $$
C=\int_{-\infty}^{\infty}\rho(t)|t|^{k}(1+|t|)^{m-k}dt,
$$
 is finite by the choice of $\rho$. The inequalities (\ref{100}) and (\ref{200}) are proved.
\end{proof}

\section{Unitary representations of Lie groups}

A strongly continuous unitary representation of a Lie group $G$ in a Hilbert space $\mathbf{H} $ is a homomorphism  $T: G \mapsto U(\mathbf{H})$ where $U(\mathbf{E})$ is the group of unitary operators of $\mathbf{H}$ such that $T(g)f,\>\>g\in G,$ is continuous on $G$ for any $f\in \mathbf{H}$.  The Garding space $\mathcal{G}$ is defined as the set of vectors $h$  in $\mathbf{H}$ that have the representation
$
h=\int_{G}\varphi(g)T(g)f dg,
$
where $f\in \mathbf{H}$, $\>\>\varphi\in C_{0}^{\infty}(G)$, $\>\>dg$ is a left-invariant measure on $G$. Every element  $X$ which belongs to the corresponding Lie algebra $ \mathbf{g}$ can be  identified with a  right-invariant vector field
$$
X\varphi(g)=\lim_{t\rightarrow 0}\frac{\varphi\left( \exp tX \cdot g\right)-\varphi(g)}{t}.
$$ 
The correspondence $X\rightarrow D(X)$ is a representation  of $\mathbf{g}$ by operators which act  on $\mathcal{G}$ by the formula 
$$
D(X)h=-\int_{G}X\varphi(g)T(g)fdg.
$$

 If $X_{1}, ..., X_{d}$ is a basis in $\mathbf{g}$ and $D_{i}=D(X_{i}),\>\>1\leq i \leq d,$
we introduce an analog of a Sobolev space  as the subspace $\mathbf{H}^{r}\subset {\bf H}$ which is the common domain of all the operators $D_{j_{1}}...D_{j_{k}},\>\>1\leq j_{1},  ...j_{k}\leq d,\>\>1\leq k\leq r, $ with the norm 
 
$$
|||f|||_{\mathbf{H}^{r}}=\|f\|_{\mathbf{H}}+\sum_{k=1}^{r}\sum_{1\leq j_{1},  ...j_{k}\leq d}\|D_{j_{1}}...D_{j_{k}}f\|_{\mathbf{H}},
$$
which is equivalent to the norm 
\begin{equation}\label{Sob-01}
\|f\|_{r}=\|f\|_{\mathbf{H}}+\sum_{1\leq i_{1},..., i_{r}\leq
d}\|D_{i_{1}}...D_{i_{k}}f\|_{\mathbf{H}},\>\> ~ r\in \mathbb{N}.
\end{equation}
It is known  that 
$
\mathcal{G}\subset \bigcap_{r\in N}\mathbf{H}^{r}=\mathbf{H}^{\infty}
$
 is invariant with respect  to all operators $D(X), \>\>X\in \textbf{g},$ and dense in every $\mathbf{H}^{r}$.
We consider the following analog of the Laplace operator \cite{Nel}, \cite{NS}
 $$
 L_{\mathcal{G}}=-D_{1}^{2}-D_{2}^{2}- ... -D_{d}^{2},
 $$
 which is  defined on the Garding space $\mathcal{G}$. 
 Since $L_{\mathcal{G}}$ is symmetric and the differential operator $-\sum_{i=1}^{d}X_{i}^{2}$ is elliptic  on the group $G$ the Theorem 2.2 in \cite{NS} implies that $L_{\mathcal{G}}$ is essentially self-adjoint, which means $\overline{L}_{\mathcal{G}}=L_{\mathcal{G}}^{*}$. In other words, the closure $\overline{L}_{\mathcal{G}}=L$ of $L_{\mathcal{G}}$ from $\mathcal{G}$ is a self-adjoint operator. Obviously, $L\geq 0$. We introduce the self-adjoint operator 
 $
 \Lambda=I+L\geq 0.
 $
 
 It was shown in \cite{Pes8}, Lemma 2.1 and Theorem 2.2, (see also \cite{Pes4},\cite{Pes5})  that in the case of a unitary representation $T$ of a Lie group $G,\>dim\>G=d,$ all our {\bf Assumptions (a)-(d)} are satisfied for one-parameter groups $T_{1}(t), ..., T_{d}(t)$, where for a basis $X_{1}, .., X_{d}$, of the Lie algebra of $G$ one has
 $$
 T_{j}(t)=T(\exp tX_{j}),\>\>\>\>t\in \mathbb{R},\>\>\>1\leq j\leq d,
 $$
 and where $\exp tX_{j}\in G,\>t\in \mathbb{R},$  is the one parameter subgroup in the direction of $X_{j}$.
 In particular, a proof of the following important fact is given in Appendix \ref{A}.

\begin{thm}\label{domain}
The space $\mathbf{H}^{r}$ with the norm (\ref{Sob-01}) is isomorphic  to the domain of $\Lambda^{r/2}$ with the norm $\|\Lambda^{r/2}f\|_{\mathbf{H}}.$
\end{thm}

An important class of representations of Lie groups appears in connection with homogeneous manifolds.
In what follows we introduce some very basic notions about unitary representations of Lie groups in function spaces on homogeneous manifolds \cite{H}, 
 \cite{V}.

 Let $M, dim M=m,$ be a
connected $C^{\infty}$-manifold. It says  that a 
Lie group $G$ effectively acts on $M$ as a group of
diffeomorphisms if

1)  every element $g\in G$ can be identified with a diffeomorphism
$$
g: M\mapsto M
$$
of $M$ onto itself and
$$
g_{1}g_{2}\cdot x=g_{1}\cdot(g_{2}\cdot x), g_{1}, g_{2}\in G,
x\in M,
$$
where $g_{1}g_{2}$ is the product in $G$ and $g\cdot x$ is the
image of $x$ under $g$,

2) the identity $e\in G$ corresponds to the trivial diffeomorphism
\begin{equation}
e\cdot x=x,
\end{equation}

3) for every $g\in G, g\neq e,$ there exists a point $x\in M$ such
that $g\cdot x\neq x$.

\bigskip

A group $G$ acts on $M$ \textit{transitively} if in addition to
1)- 3) the following property holds

4) for any two points $x,y\in M$ there exists a diffeomorphism
$g\in G$ such that
$$
g\cdot x=y.
$$

A \textit{homogeneous} manifold $M$ is an
$C^{\infty}$-compact manifold on which transitively acts a 
Lie group $G$. In this case $M$ is necessary of the form $G/K$,
where $K$ is a subgroup of $G$. The notation $L_{2}(M)$ is used for the usual Hilbert spaces
$L_{2}(M,dx)$, where $dx$ is an invariant (with respect to $G$-action)
measure.
It is known that the correspondence 
$$
g\rightarrow T(g),\>\>\>\>\>T(g)f(x)=f(g\cdot x), 
$$
 where $g\in G,\>\>x\in M,\>\>f\in L_{2}(M)$ is a unitary representation of $G$ in $L_{2}(M)$.

\begin{exmp} {\bf A compact homogeneous manifold.}
The situation on a  unit sphere is typical for at least all two-point homogeneous compact manifolds.
Consider the unit sphere
$$
\mathbb{S}^{n}=\left\{x=(x_{1}, x_{2}, ...,x_{n+1})\in \mathbb{R}^{n+1}: \|x\|^{2}=x_{1}^{2}+x_{2}^{2}+...+x_{n+1}^{2}=1\right\}.
$$
Let $e_{1},...,e_{n+1}$ be the standard orthonormal basis in $\mathbb{R}^{n+1}$.  If $SO(n+1)$ and $SO(n)$ are the groups of rotations of $\mathbb{R}^{n+1}$ and  $\mathbb{R}^{n}$ respectively then $\mathbb{S}^{n}=SO(n+1)/SO(n)$.
On $\mathbb{S}^{n}$ we consider vector fields
$
X_{i,j}=x_{j}\partial_{x_{i}}-x_{i}\partial_{x_{j}},\>i<j\>,
$
which are generators of one-parameter groups of rotations   $\exp tX_{i,j}\in SO(n+1)$ in the plane $(x_{i}, x_{j})$.  These groups are defined by the formulas for $\tau\in \mathbb{R}$,
$$
\exp \tau X_{i,j}\cdot (x_{1},...,x_{n+1})=(x_{1},...,x_{i}\cos \tau -x_{j}\sin \tau ,..., x_{i}\sin \tau +x_{j}\cos \tau ,..., x_{n+1}).
$$
Clearly, there are $d=\frac{1}{2}n(n-1)$ such groups.
Let $T_{i,j}(\tau)$ be a one-parameter group which is a representation of $\exp \tau X_{i,j}$ in the space $L_{2}(\mathbb{S}^{})$. It acts on $f\in L_{2}(\mathbb{S}^{n})$ by the following formula
$$
T_{i,j}(\tau)f(x_{1},...,x_{n+1})=f(x_{1},...,x_{i}\cos \tau -x_{j}\sin \tau ,..., x_{i}\sin \tau +x_{j}\cos \tau ,..., x_{n+1}).
$$
The infinitesimal operator of this group will be denoted as $D_{i,j}$. The operator $L=-\sum_{i<j}D_{i,j}^{2}$ is the regular Laplace-Beltrami operator on $
\mathbb{S}^{n}$ and spaces $PW_{\sigma}(\mathbb{S}^{n})$ are comprised of appropriate  linear combinations of spherical harmonics. 

\begin{remark}This example explains  reasons why $d$ is typically greater than $n=dim\>M$. In this case it happens  because vector fields $D_{j}$ can  vanish along low dimensional submanifolds.  For example, on $\mathbb{S}^{2}\subset \mathbb{R}^{3}$ one needs three fields $X_{1, 2}, \>X_{1, 3}, \>X_{2, 3}$ since they vanish at the poles $(0, 0, \pm1),\> (0, \pm1, 0), \> (\pm1, 0,0)$ respectively.
\end{remark}

\end{exmp}

\begin{exmp} {\bf A non-compact homogeneous manifold.}

Consider the upper half of the hyperboloid
$$
\mathbb{H}_{+}^{n} = \{x=(x_{1}, x_{2}, ...,x_{n+1})\in \mathbb{R}^{n+1}:  -x_{1}^{2}-x_{2}^{2}-...-x_{n}^{2}+x_{n+1}^{2}=1,\>x_{n+1}>0              \}.
$$
Let $e_{1},...,e_{n+1}$ be the standard orthonormal basis in $\mathbb{R}^{n+1}$.  If $SH(n+1)$ is the group of hyperbolic rotations which means it preserves the form
$$
[x,y]=-x_{1}y_{1}-...-x_{n}y_{n}+x_{n+1}y_{n+1}
$$
then $\mathbb{H}_{+}^{n}=SH(n+1)/SO(n)$.
On $\mathbb{H}_{+}^{n}$ we consider the vector fields
$
X_{i,j}=x_{j}\partial_{x_{i}}-x_{i}\partial_{x_{j}},\>i<j<n+1\>,
$
which generate euclidean rotation groups in  the planes $(x_{i}, x_{j}),\>i<j<n+1$,
and the fields $X_{i, n+1}=x_{n+1}\partial_{x_{i}}+x_{i}\partial_{x_{n+1}}$
which are generators of the hyperbolic  groups of rotations    in the planes $(x_{i}, x_{n+1})$.  These groups are defined by the formulas for $\tau\in \mathbb{R}$,
$$
\exp \tau X_{i,j}\cdot (x_{1},...,x_{n+1})=(x_{1},...,x_{i}\cos \tau -x_{j}\sin \tau ,..., x_{i}\sin \tau +x_{j}\cos \tau, ..., x_{n+1} ),
$$
$$
\exp \tau X_{i,n+1}\cdot (x_{1},...,x_{n+1})=(x_{1},...,x_{i}\cosh \tau -x_{n+1}\sinh \tau ,..., x_{i}\sinh \tau +x_{n+1}\cosh \tau ).
$$
Strictly continuous one-parameter groups of operators $T_{i,j}(\tau)$   which are representations of $\exp \tau X_{i,j}$ in the space $L_{2}(\mathbb{H}^{n}_{+})$ can be used to construct corresponding modulus of continuity $\Omega^{r}(\sigma, f)$.  Their   infinitesimal operators $D_{i,j}$ are just operators $X_{i,j}$ in the space $L_{2}(\mathbb{H}^{n}_{+})$ and  $L=-\sum_{i<j\leq n+1}D_{i,j}^{2}$ is an elliptic self-adjoint non-negative operator   in  $
L_{2}(\mathbb{H}^{n}_{+})$ which has continuous spectrum.   As well as we know, the spectral resolution of this operator is unknown. However, the abstract Definition \ref{PW} and notion of best approximation (\ref{appr}) still make sense.  
\end{exmp}

\begin{exmp} {\bf Schr\"{o}dinger representation of the Heisenberg group.} 

The $(2n+1)$-dimensional Heisenberg group $\mathbb{H}_{2n+1}$ has a unitary representation in the space $L_{2}(\mathbb{R}^{n})$ 
$$
T(p,\>q,\>x)=e^{i(t+\left<q,\>x\right>)}f(x+p), \>\>\>p,\>q,\>x\in \mathbb{R}^{n},\>\>t\in \mathbb{R}. 
$$
One can consider  the following set of infinitesimal operators where $i=\sqrt{-1}$:
$$
D_{j}=\partial_{j}, \>\>1\leq j\leq n;\>\>\>\>D_{j}=ix_{j}, \>\>n+1\leq j\leq 2n;\>\>\>\> D_{2n+1}=i.
$$
In this case every $T_{j}(\tau) , \>1\leq j\leq n,\>$ is a translation (\ref{transl}) along variable $x_{j}$,
$$
T_{j}(\tau)f(x_{1}, ..., x_{n})=e^{i\tau x_{j}}f(x_{1}, ..., x_{n}), \>\>\>n+1\leq j\leq 2n,\>
$$
and $T_{2n+1}(\tau)f(x_{1}, ..., x_{n})=e^{i\tau}f(x_{1}, ..., x_{n})$.
The operator $L$ is the shifted $n$-dimensional linear oscillator 
$$
L=-\Delta+|x|^{2}+1,\>\>\>
$$
where 
$$
\Delta=\sum_{j=1}^{n}\partial_{j}^{2},\>\>\>\>|x|^{2}=\sum_{j}x_{j}^{2},\>\>\>\>x=(x_{1},...,x_{n}).
$$
It is known that the spectrum of this operator is discrete and its eigenfunctions are products of one-dimensional Hermite functions. One can easily describe corresponding Paley-Wiener spaces and to construct corresponding modulus of continuity by using groups of operators $T_{j},\>\>1\leq j\leq 2n+1$.

\end{exmp}

\section{Appendix. Proof of Theorem \ref{domain}}\label{A}

\begin{thm}
The space $\mathbf{H}^{r}$ with the norm (\ref{Sob-01}) is isomorphic  to the domain of $\Lambda^{r/2}$ with the norm $\|\Lambda^{r/2}f\|_{\mathbf{H}}.$
\end{thm}

\begin{proof}
In the case $r=2k,$ the inequality 
 \begin{equation}\label{2.9}
  \|f\|_{\mathbf{H}^{2k}}\leq C(k)\|\Lambda^{k}f\|_{\mathbf{H}}
\end{equation}
is shown in \cite{Nel}, Lemma 6.3. The reverse inequality is obvious.   
We consider now the case $r=2k+1$.  If $f\in \mathbf{H}^{2}=\mathcal{D}(\Lambda)$, then since 
$
\mathcal{D}(\Lambda)\subset \mathcal{D}(\Lambda^{1/2})
$
we have
\begin{equation}
\label{1/2}
\|f\|_{\mathbf{H}}
^{2}+\sum_{j}\|D_{j}f\|_{\mathbf{H}}
^{2}=\left<f,f\right>+\sum_{j}\left<D_{j}f,D_{j}f\right>=\left<f,f\right>+\left<-\sum_{j}D_{j}^{2}f,f\right>=
$$
$$
\left<f-\sum_{j}D_{j}^{2}f,f\right>=\left<\Lambda f,f\right>=\|\Lambda^{1/2}f\|_{\mathbf{H}}
^{2}.
\end{equation}
These equalities imply that $\mathbf{H}^{1}$ is isomorphic to $\mathcal{D}(\Lambda^{1/2})$.
Our goal is to to prove existence of an isomorphism between $\mathbf{H}^{2k+1}$ and $\mathcal{D}(\Lambda^{k+1/2})$. It is enough to establish equivalence of the corresponding norms on the set $\mathbf{H}^{4k+2}=\mathcal{D}(\Lambda^{2k+1})$ since the latest is dense in $\mathbf{H}^{2k+1}$.
If $f\in \mathbf{H}^{4k+2}\subset \mathbf{H}^{2k}$ then $D_{j}f\in \mathbf{H}^{4k+1}\subset \mathbf{H}^{2k}$ and 
$
\Lambda^{k}f=\sum_{m\leq k}\sum D_{j_{1}}^{2}...D_{j_{m}}^{2}f.
$
Thus if $f\in \mathbf{H}^{4k+2}$ then
$$
\left\|D_{j_{1}}...D_{j_{2k+1}}f  \right \|_{\mathbf{H}}
\leq C\left\|\Lambda^{k}D_{j_{2k+1}}f \right\|_{\mathbf{H}}
=\left\|\sum_{m\leq k}\sum D_{j_{1}}^{2}...D_{j_{m}}^{2}D_{j_{2k+1}}f \right\|_{\mathbf{H}}
.
$$
Multiple applications of the identity $D_{i}D_{j}-D_{j}D_{i}=\sum_{k} c_{i,j}^{k}D_{k}$ which holds on $\mathbf{H}^{2}$  lead to the inequality
$
\left\|D_{j_{1}}...D_{j_{2k+1}}f \right \|_{\mathbf{H}}
 \leq C\left(\|D_{j_{2k+1}}\Lambda^{k}f\|_{\mathbf{H}}
+\|Rf\|_{\mathbf{H}}
\right),
$
where $R$ is a polynomial in $D_{1},...,D_{d}$ whose degree $\leq 2k$. According to (\ref{2.9}) and (\ref{1/2}) we have that 
$$
\left\|D_{j_{2k+1}}\Lambda^{k}f \right\|_{\mathbf{H}}
 \leq  \left\|\Lambda^{1/2}\Lambda^{k}f \right\|_{\mathbf{H}}
=\left\|\Lambda^{k+1/2}f \right\|_{\mathbf{H}}
$$
and also
$
\left\|Rf \right \|_{\mathbf{H}}
\leq \|f\|_{\mathbf{H}^{2k}}\leq C(k)\left\|\Lambda^{k}f \right\|_{\mathbf{H}}
.
$
Since $\|\Lambda^{k}f\|_{\mathbf{H}}
$ is not decreasing with $k$ we get the following estimate 
$$
\|D_{j_{1}}...D_{j_{2k+1}}f\|_{\mathbf{H}}
\leq C(k)\|\Lambda^{k+1/2}f\|_{\mathbf{H}}
, \>\>\>f\in \mathbf{H}^{4k+2}.
$$
Now, since for $f\in \mathbf{H}^{4k+2}$ we have 
$
D_{j_{1}}... D_{j_{2k}}f\in \mathbf{H}^{2k+2}\subset \mathbf{H}^{1}=\mathcal{D}(\Lambda^{1/2}),
$
and the equality 
$
\Lambda^{k}f=\sum_{m\leq k}\sum D_{j_{1}}^{2}...D_{j_{m}}^{2}f,
$
holds we obtain, by using (\ref{1/2}) 
$$
\|\Lambda^{k+1/2}f\|_{\mathbf{H}}
=\|\Lambda^{1/2}\sum_{m\leq k}\sum D_{j_{1}}^{2}...D_{j_{m}}^{2}f\|_{\mathbf{H}}
\leq C\|f\|_{\mathbf{H}^{2k+1}},\>\>\>\>C=C(k).
$$
Theorem is proved.  
\end{proof}

\begin{col}
If $T$ is a strongly continuous unitary representation of a Lie group in a Hilbert space $\mathbf{H}$ and $X_{1},  ..., X_{d}$ is a basis in the corresponding algebra Lie $\mathbf{g}$ then for $T_{j}(t)=T(\exp tX_{j}),\>\>1\leq j\leq d,$ and their generators $D_{j}, \>\>1\leq j\leq d,$ the item (d) in the {\bf Assumptions}  is satisfied.
\end{col}

 \makeatletter
\renewcommand{\@biblabel}[1]{\hfill#1.}\makeatother

\end{document}